\title{Subword Complexity and (non)-automaticity of certain completely multiplicative functions}
\author{YINING HU \\
CNRS, Institut de Math\'ematiques de Jussieu-PRG \\
Universit\'e Pierre et Marie Curie, Case 247 \\
4 Place Jussieu \\
F-75252 Paris Cedex 05 (France) \\
{\tt yining.hu@imj-prg.fr}}
\date{}
\begin{document}
\maketitle
\newtheorem{lem}{Lemma}
\newtheorem{thm}{Theorem}
\newtheorem{coro}{Corollary}
\theoremstyle{definition}
\newtheorem{define}{Definition}

\begin{abstract}
In this article, we prove that for a completely multiplicative function $f$ from $\mathbb{N}^*$ to a field $K$ such that the set
$$\{p \;|\; f(p)\neq 1_K \;\mbox{and }p \mbox{ is prime}\}$$
is finite, the asymptotic subword complexity of $f$ is $\Theta(n^t)$, where $t$ is the number of primes 
$p$ that $f(p)\neq 0_K, 1_K$. This proves in particular that
sequences like $((-1)^{v_2(n)+v_3(n)})_n$ are not $k$-automatic for $k\geq 2$.

\end{abstract}
\smallskip
\noindent \textbf{Keywords.} 
Completely multiplicative functions; $k$-automatic sequences; subword complexity.

\section{Introduction}
The subject of this article is the subword complexity and (non-)automaticity of certain completely multiplicative functions. 
From the definition we see that a completely multiplicative function is completely
determined by its value on prime numbers. 

We recall the definition and  a few results about subword complexity and automaticity.
The proof of Theorem \ref{up} can be found for example 
in Chapter 10 of \cite{as},
and the proof of Theorem \ref{Cobham} can be found in \cite{complexity}.

\begin{define}
 Let $\mathbf{u}$ be a infinite sequence of symbols from an alphabet. We define the subword complexity $p_{\mathbf{u}}(n)$ of $u$ to be the number of different factors 
 (consecutive letters) of length $n$ in $\mathbf{u}$.
\end{define}

\begin{thm}[Morse and Hedlund \cite{mh}]\label{up}
 A sequence $\mathbf{u}$ is ultimately periodic if and only if there exists a non-negative integer $k$ such that $p_\mathbf{u}(k)\leq k$.
\end{thm}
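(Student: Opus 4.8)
The plan is to work directly with the complexity function $p_{\mathbf u}$, exploiting that it is non-decreasing and that a single index at which it fails to increase forces it to be eventually constant. First I would record two facts. (i) We have $p_{\mathbf u}(0)=1$ and $p_{\mathbf u}(n)\le p_{\mathbf u}(n+1)$ for all $n$, since $\mathbf u$ is infinite so every occurrence of a length-$n$ factor can be extended one step to the right, making the map that sends a length-$(n+1)$ factor to its length-$n$ prefix surjective. (ii) If $p_{\mathbf u}(n)=p_{\mathbf u}(n+1)$ for some $n$, then every factor $w$ with $|w|\ge n$ has a \emph{unique} right extension, i.e. exactly one letter $a$ with $wa$ a factor of $\mathbf u$. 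Indeed, writing each length-$(n+1)$ factor uniquely as $va$ with $v$ its length-$n$ prefix gives $p_{\mathbf u}(n+1)=\sum_v \#\{a : va \text{ is a factor}\}$, a sum of $p_{\mathbf u}(n)$ terms each at least $1$; the equality $p_{\mathbf u}(n+1)=p_{\mathbf u}(n)$ forces each term to equal $1$, and the right extension of a longer $w$ is then forced by that of its length-$n$ suffix. Iterating (ii) yields $p_{\mathbf u}(m)=p_{\mathbf u}(n)$ for every $m\ge n$.

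For the ``if'' direction, assume $p_{\mathbf u}(k)\le k$; note $k\ge 1$ because $p_{\mathbf u}(0)=1$. Since $p_{\mathbf u}$ is integer-valued, non-decreasing, and $p_{\mathbf u}(0)=1$, it cannot strictly increase at each of the $k$ steps from $0$ to $k$ (that would give $p_{\mathbf u}(k)\ge k+1$), so $p_{\mathbf u}(j)=p_{\mathbf u}(j+1)$ for some $j<k$. By (ii), $p_{\mathbf u}$ is constant, say equal to $C$, from index $j$ on, and every factor of length $\ge j$ has a unique right extension. Now consider the $C+1$ length-$j$ factors occurring at positions $0,1,\dots,C$ of $\mathbf u$; by the pigeonhole principle two of them coincide, occurring at positions $s<t\le C$. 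Applying unique right extension repeatedly gives $u_{s+i}=u_{t+i}$ for every $i\ge 0$, so $\mathbf u$ is ultimately periodic with period $t-s$ from index $s$ onward.

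For the ``only if'' direction, suppose $u_n=u_{n+q}$ for all $n\ge n_0$, with $q\ge 1$. A length-$n$ factor is determined by any one of its starting positions, and two starting positions $\ge n_0$ that are congruent modulo $q$ give the same factor; hence every length-$n$ factor with $n\ge 1$ occurs at a position in $\{0,\dots,n_0-1\}$ or at the representative in $\{n_0,\dots,n_0+q-1\}$ of its residue class, so $p_{\mathbf u}(n)\le n_0+q$. Taking $k=n_0+q$ yields $p_{\mathbf u}(k)\le k$.

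The only real content is step (ii) — that one ``stall'' of the complexity function propagates to all larger lengths, which is the unique-extension (Rauzy-graph) phenomenon — together with the observation in the ``if'' direction that a repetition among enough ($C+1$) initial factors, combined with forward determinism, produces an actual period rather than merely a recurring factor. Everything else is elementary bookkeeping about finite versus infinite occurrences of factors, and I expect step (ii) to be the part requiring the most care.
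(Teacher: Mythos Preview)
Your argument is correct and is the standard Morse--Hedlund proof via the ``stall implies unique right extension'' step. The paper does not supply its own proof of this theorem; it merely cites Chapter~10 of Allouche--Shallit \cite{as}, so there is nothing to compare against beyond noting that the reference contains essentially the same argument you give.
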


We recall one of the equivalent definitions of an $k$-automatic sequence.

\begin{define}
 Let $k\geq 2$ be an integer. A sequence $\mathbf{u}$ is called $k$-automatic if there is a finite set of sequences containing $\mathbf{u}$ and closed under 
 the operations
 $$(v(n))_n \rightarrow (v(kn+r))_n, \;\; \mbox{  for }r = 0,...,k-1. $$
\end{define}

\begin{lem}\label{coding}
 Let $\mathbf{u} = (u_n )_{n\geq0}$ be a $k$-automatic sequence, and let $\rho$ be a coding.
Then the sequence $\rho(u)$ is also $k$-automatic.
\end{lem}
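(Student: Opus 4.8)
The plan is to argue directly from the kernel-style definition of $k$-automaticity recalled above. Assume $\mathbf{u}$ is $k$-automatic, so there is a finite set $S$ of sequences with $\mathbf{u}\in S$ that is closed under the decimation maps $D_r\colon (v(n))_n \mapsto (v(kn+r))_n$ for $r=0,\dots,k-1$. Extend the coding $\rho$ from letters to sequences coordinatewise, writing $\rho(v)$ for the sequence $n\mapsto \rho(v(n))$. The candidate witnessing set for $\rho(\mathbf{u})$ is $\rho(S):=\{\rho(v) : v\in S\}$, which is finite (being the image of a finite set under one map) and contains $\rho(\mathbf{u})$ since $\mathbf{u}\in S$.

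The heart of the matter is that coordinatewise application of a coding commutes with every decimation map. Indeed, for any sequence $v$ and any $r$,
$$D_r(\rho(v))(n) = \rho(v)(kn+r) = \rho\bigl(v(kn+r)\bigr) = \rho\bigl(D_r(v)\bigr)(n),$$
so $D_r(\rho(v)) = \rho(D_r(v))$. Consequently, if $w\in\rho(S)$, say $w=\rho(v)$ with $v\in S$, then $D_r(w)=\rho(D_r(v))$; since $S$ is closed under $D_r$ we have $D_r(v)\in S$, hence $D_r(w)\in\rho(S)$. Thus $\rho(S)$ is a finite set containing $\rho(\mathbf{u})$ and closed under all the $D_r$, which is exactly what the definition requires.

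I do not anticipate a genuine obstacle: the whole point is that a coding is a letter-to-letter substitution applied in place, so it cannot interfere with the index arithmetic $n\mapsto kn+r$. The only things to keep straight are bookkeeping details — fixing the coordinatewise extension of $\rho$ once and for all so that the identity $\rho(v)(kn+r)=\rho(v(kn+r))$ holds on the nose, and reading ``finite set of sequences'' so that the possible collapse of distinct $v$'s to equal $\rho(v)$'s in $\rho(S)$ is harmless (it only makes the set smaller). An alternative, if one prefers the automaton-with-output formulation, is to compose the output function of a $k$-automaton generating $\mathbf{u}$ with $\rho$; but with the definition stated here the direct kernel argument above is the cleanest route.
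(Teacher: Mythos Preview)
Your argument is correct and is the natural one given the kernel-style definition of $k$-automaticity adopted in the paper: the decimation maps $D_r$ commute with any coordinatewise coding, so the image $\rho(S)$ of a witnessing finite kernel $S$ is again a finite kernel for $\rho(\mathbf{u})$. There is nothing to compare against, however: the paper states this lemma without proof, treating it as a standard fact (it is indeed well known and appears, for instance, in the Allouche--Shallit reference cited). Your write-up would serve perfectly well as the missing justification.
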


We recall some asymptotic notations for asymptotic complexity: \\
$f(n)=O(g(n))$:  $f$ is bounded above by $g$ (up to constant factor) asymptotically;\\
$f(n)=\Omega(g(n))$:  $f$ is bounded below by $g$ (up to constant factor) asymptotically;\\
$f(n)=\Theta(g(n))$: $f$  is bounded both above and below (up to constant factors) by $g$ asymptotically.

\begin{thm}\label{Cobham}\emph{(Cobham)}
If a sequence $\mathbf{u}$ is $k$-automatic, then $p_\mathbf{u}(n)=O(n)$.
 
\end{thm}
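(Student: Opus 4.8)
The plan is to reduce the problem to counting factors of a fixed point of a uniform morphism, where the self-similar block structure makes a linear bound transparent.

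I would first invoke the standard structural description of automatic sequences (Cobham's theorem; see \cite{as}): every $k$-automatic sequence is the image under a coding $\rho$ of a fixed point $\mathbf{w}$ of a $k$-uniform morphism $\sigma$, that is, a morphism with $|\sigma(a)|=k$ for every letter $a$ and $\sigma(\mathbf{w})=\mathbf{w}$. Since a coding acts letter by letter, sending the length-$n$ factor of $\mathbf{w}$ beginning at position $i$ to the length-$n$ factor of $\rho(\mathbf{w})$ beginning at position $i$ is a surjection from the length-$n$ factors of $\mathbf{w}$ onto those of $\rho(\mathbf{w})$; hence $p_{\rho(\mathbf{w})}(n)\le p_{\mathbf{w}}(n)$, and it suffices to show $p_{\mathbf{w}}(n)=O(n)$.

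Next I would extract the key recursion from the block decomposition $\mathbf{w}=\sigma(\mathbf{w})=\sigma(w_0)\sigma(w_1)\sigma(w_2)\cdots$, a concatenation of blocks each of length $k$. A window of $n$ consecutive letters of $\mathbf{w}$ meets at most $\lceil n/k\rceil+1$ consecutive blocks, so it lies inside $\sigma(v)$ for some factor $v$ of $\mathbf{w}$ of length $\ell:=\lceil n/k\rceil+1$, and inside $\sigma(v)$ it starts at one of the $k$ positions of the first block. Consequently the window is determined by the pair $(v,\text{offset})$, which gives
\[
p_{\mathbf{w}}(n)\ \le\ k\cdot p_{\mathbf{w}}\bigl(\lceil n/k\rceil+1\bigr).
\]

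Finally I would unwind this inequality. The function $p_{\mathbf{w}}$ is non-decreasing and finite for every argument, and iterating the bound about $\log_k n$ times pushes the argument below an absolute constant — the accumulated additive terms sum to at most $2(1-1/k)^{-1}\le 4$ — while the multiplicative factor picked up is $k^{\lceil\log_k n\rceil}=O(n)$; hence $p_{\mathbf{w}}(n)=O(n)$, and therefore $p_{\mathbf{u}}(n)=O(n)$. The only ingredient here that is not elementary is the structural characterization of automatic sequences quoted at the outset; everything after it is a short combinatorial computation, so I would expect no real difficulty in the write-up beyond citing that theorem accurately. (Alternatively one can argue directly from a finite automaton generating $\mathbf{u}$ by reading base-$k$ digit strings: a window of length $n$ is determined by the roughly $\log_k n$ lowest-order digits of the positions involved together with the state reached after reading the remaining high-order digits, and these data range over only $O(n)$ values; this avoids morphisms but requires the normalization that the digit $0$ fixes the initial state.)
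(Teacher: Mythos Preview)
The paper does not prove this theorem. In the introduction the author explicitly states that ``the proof of Theorem~\ref{Cobham} can be found in \cite{complexity}'', and the result is invoked only as a black box thereafter; there is no argument in the paper to compare yours against.

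For what it is worth, your proof is correct and is the standard one: pass via Cobham's structure theorem to a fixed point $\mathbf{w}$ of a $k$-uniform morphism, note that a coding cannot increase the factor count, observe that every length-$n$ factor of $\mathbf{w}$ is determined by a length-$(\lceil n/k\rceil+1)$ factor together with one of $k$ offsets, and iterate the resulting inequality $p_{\mathbf{w}}(n)\le k\,p_{\mathbf{w}}(\lceil n/k\rceil+1)$ roughly $\log_k n$ times to reach a bounded argument while accumulating a multiplicative factor $k^{\lceil\log_k n\rceil}=O(n)$. The automaton-based alternative you sketch at the end is likewise a well-known route to the same bound.
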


\begin{thm}[Cobham]\label{kl}
Let $k,l\geq 2$ be two multiplicatively independent integers, and suppose the sequence $\mathbf{u}=(u(n))_n\geq 0$ is both $k$- and
$l$-automatic, then $\mathbf{u}$ is ultimately periodic.
\end{thm}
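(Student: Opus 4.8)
\medskip
\noindent The plan is to argue by contradiction: assume $\mathbf{u}$ is \emph{not} ultimately periodic, and derive a contradiction from its being simultaneously $k$- and $l$-automatic. First the soft ingredients. By Theorem~\ref{Cobham} we have $p_{\mathbf{u}}(n)=O(n)$, while by the contrapositive of Theorem~\ref{up} we have $p_{\mathbf{u}}(n)\geq n+1$ for every $n$; so $\mathbf{u}$ has between $n+1$ and $O(n)$ distinct factors of each length. Next I would invoke the standard structure theorem for automatic sequences — a $k$-automatic sequence is the image under a coding of a fixed point of a $k$-uniform morphism — to write $\mathbf{u}=\varphi(\sigma^{\omega}(a))$ for some $k$-uniform morphism $\sigma$ with a fixed point beginning with the letter $a$ and some coding $\varphi$, and likewise $\mathbf{u}=\psi(\tau^{\omega}(b))$ for an $l$-uniform morphism $\tau$; by Lemma~\ref{coding} we may move freely between $\mathbf{u}$ and the underlying letter-level fixed points.

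\medskip
\noindent The objects to track are, for each factor $w$ of $\mathbf{u}$, the occurrence set $S_w=\{\,i\geq 0:\mathbf{u}[i,i+|w|)=w\,\}$ and its counting function $C_w(N)=\#\bigl(S_w\cap[0,N)\bigr)$. A mild strengthening of the closure properties behind Lemma~\ref{coding} (which I would derive directly from the automaton, or cite) shows that the characteristic sequence of $S_w$ is again $k$-automatic; consequently $C_w$ is ``$k$-regular'': the values of $C_w$ on a block $[k^{j}t,\,k^{j}(t+1))$ are recovered from its values at the next coarser scale by one of finitely many linear rules read off the automaton. In particular the upper and lower densities of $S_w$ exist, are rational, and are computable, and the profile of $C_w$ at scale $k^{j}$ belongs to a finite list. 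The same reasoning applied to $\tau$ gives a second such description of $C_w$, attached instead to the scales $l^{j}$.

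\medskip
\noindent The crux is to collide the two self-similar descriptions. Multiplicative independence of $k$ and $l$ is equivalent to $\log k/\log l\notin\mathbb{Q}$, and hence, by equidistribution, to the density of $\{\,k^{a}/l^{b}:a,b\geq 0\,\}$ in $(0,\infty)$; in particular $k^{a}$ and $l^{b}$ can be made arbitrarily close with $a,b\to\infty$. Feeding such near-coincidences of scales into the two regular descriptions of $C_w$ and pigeonholing on the finitely many automaton configurations controlling each, one first excludes genuine oscillation (so the density $d(S_w)$ exists) and then forces the error term $C_w(N)-d(S_w)\,N$ to be eventually periodic; this makes $S_w$ eventually periodic. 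Applying this to all factors $w$ of one sufficiently large length $m$ partitions $\mathbb{N}$ into finitely many eventually periodic sets, and a common eventual period $P$ of these is an eventual period of $\mathbf{u}$, contradicting aperiodicity.

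\medskip
\noindent The real obstacle is precisely this collision step: turning a metric near-coincidence $k^{a}\approx l^{b}$ into rigidity of the sequence. Quantitatively it demands a careful three-distance/equidistribution estimate to locate the good exponent pairs and to control how $C_w$ transports between the $k$-adic and $l$-adic grids, followed by a finite-state pigeonhole upgrading ``small fluctuation on arbitrarily fine scales of both types'' to ``eventually periodic''. Everything preceding it (the complexity bounds, the substitutive form, the regularity and density statements for $C_w$) is comparatively routine. A logical alternative recasts the whole statement inside B\"uchi arithmetic $\langle\mathbb{N},+,V_k\rangle$ and derives it from definability-transfer results between the structures for $k$ and for $l$, but this only relocates the difficulty.
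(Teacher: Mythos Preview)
The paper does not prove this statement at all: Theorem~\ref{kl} is quoted as a classical result of Cobham and used only as background motivation in the introduction, with no argument supplied. So there is nothing in the paper to compare your proposal against.

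As for the proposal itself, what you have written is an outline rather than a proof, and you are candid about this. The preliminary pieces --- the complexity sandwich $n+1\le p_{\mathbf u}(n)=O(n)$, the substitutive description of automatic sequences, the automaticity of the indicator of $S_w$, the $k$-regularity of $C_w$, and the density of $\{k^a/l^b\}$ from $\log k/\log l\notin\mathbb Q$ --- are all correct and standard. But the step you yourself flag as ``the real obstacle'' is essentially the entire theorem: you assert that a near-coincidence of scales plus a pigeonhole on automaton states forces $C_w(N)-d(S_w)N$ to be eventually periodic, hence $S_w$ eventually periodic. No mechanism is given for this, and it is not clear the sketch as stated avoids circularity: the indicator of $S_w$ is itself a $\{0,1\}$-valued sequence that is both $k$- and $l$-automatic, so concluding it is eventually periodic is just Cobham's theorem again for that sequence. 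The known proofs (Cobham's original, Hansel's via syndeticity, Durand's via return words, or the model-theoretic route through B\"uchi arithmetic that you allude to) all do substantial combinatorial work at exactly this point, and none of that work appears here. So the proposal correctly locates where the difficulty lives but does not resolve it.
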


As a first example we can consider a completely multiplicative sequence $(u(n))_n$ that takes values
in $\mathbb{R}$ such that $u(2)=-1$, and $u(p)=1$ for 
all $p\in \mathbb{P}\backslash \{2\}$, where $\mathbb{P}$ denotes the set of prime numbers. Then we have for all $n\in\mathbb{N}^*$, $u(n)=(-1)^{v_2(n)}$, 
where $v_p$ is the $p$-adic valuation. This sequence is $2$-automatic, for we have
$$u(2n)=-u(n),$$
$$u(2n+1)=1.$$
It is easy to see that $\mathbf{u}$ is not ultimately periodic. Thus by Theorem \ref{up} and Theorem \ref{Cobham}, 
we have $p_{\mathbf{u}}(n)=\Theta(n)$.
In general, for all prime $p$, the sequence $((-1)^{v_p(n)})_{n\geq 1}$ is $p$-automatic and has asymptotic subword complexity $\Theta(n)$.

For a more interesting example, consider the completely multiplicative sequence 
$\mathbf{a}=(a(n))_{n\geq 1}$ taking values in $\mathbb{R}$ such that $a(2)=a(3)=-1$, and $a(p)=1$ for 
$p\in \mathbb{P}\backslash\{2,3\}$. Intuitively this sequence cannot be $k$-automatic: if $\mathbf{a}$ is $2$-automatic then there is no reason why it should
not be $3$-automatic, but $\mathbf{a}$ is not ultimately periodic (which we will prove later), so by Theorem \ref{kl}, $\mathbf{a}$ cannot be at the same time
$2$- and $3$-automatic. It can be shown that $\mathbf{a}$ is not $6$-automatic, and there is no reason for $\mathbf{a}$ to be $k$-automatic for 
$k$ other than $2$, $3$ or $6$ either. Indeed, we will prove in the Section 2 that the subword complexity of $\mathbf{a}$ is $\omega(n^2)$, which implies by 
Theorem \ref{Cobham} that it cannot be $k$-automatic. In Section 3 we a general result.

\section{The example of $(-1)^{v_2(n)+v_3(n)}$}
In this section we prove that the asymptotic subword complexity of the sequence $(u(n)))_{n\geq 1}=((-1)^{v_2(n)+v_3(n)})_{n\geq 1}$ is $\Theta(n^2)$. 

It is easy to see that since $\mathbf{u}$ is the product of two sequences $((-1)^{v_2(n)})_{n\geq 1}$ and 
$((-1)^{v_3(n)})_{n\geq 1}$, both of which have asymptotic subword complexity $\Theta(n)$, the asymptotic subword complexity of $\mathbf{u}$ is $O(n^2)$.
Indeed, we have the following lemma:

\begin{lem}\label{O}

Let the function $f_0(n)$   be the product of functions $f_1(n)$, $f_2(n)$,..., $f_k(n)$. 
Then for all $n\in\mathbb{N}$, $p_{f_0}(n)\leq \prod\limits_{i=1}^k p_{f_i}(n)$. 
\end{lem}
\begin{proof}
Let $F_{i,n}$ be the set of factors of length $n$ of $f_i$. There is an surjection from $\prod\limits_{i=1}^k F_{i,n}$ to $F_{0,n}$.
\end{proof}

We recall B\'ezout's Lemma in the form that we need:
\begin{lem}[B\'ezout]\label{bz}
 Let $n$ be an integer and let $p$ and $q$ be coprime integers. Then
 $$\{k\cdot p-l\cdot q\;|\; k,l\in\mathbb{Z} \mbox{ and } k,l>n \}=\mathbb{Z}.$$
\end{lem}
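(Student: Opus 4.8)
The plan is to derive this strengthened form from the classical Bézout identity and then ``translate'' the coefficients so that both of them exceed~$n$. First I would invoke the usual Bézout theorem: since $p$ and $q$ are coprime, there exist integers $a,b$ with $ap-bq=1$. Multiplying through by an arbitrary $m\in\mathbb{Z}$ gives $(ma)p-(mb)q=m$, which already shows that every integer is of the form $kp-lq$ with $k,l\in\mathbb{Z}$; what is left is to arrange $k,l>n$.

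For that I would use the simple observation that the value $kp-lq$ is unchanged when $(k,l)$ is replaced by $(k+tq,\,l+tp)$ for any $t\in\mathbb{Z}$, since the extra term $tqp-tpq$ vanishes. As $p$ and $q$ are positive, for $t$ large enough both $k+tq$ and $l+tp$ exceed~$n$; taking $(k,l)=(ma+tq,\,mb+tp)$ for such a $t$ then exhibits $m$ as an element of the set. The reverse inclusion is trivial, so the two sets coincide.

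I do not expect a real obstacle here; the only point requiring a moment's care is that the simultaneous shift works precisely because $p$ and $q$ have the same (positive) sign, so a single parameter $t$ pushes both coefficients past~$n$ at once. (If $p$ or $q$ were allowed to be negative the identity would fail, so $p,q$ are implicitly taken to be positive, as they are in the intended application to prime numbers.)
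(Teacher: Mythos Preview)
Your proposal is correct and follows exactly the route the paper takes: invoke the classical B\'ezout identity and then shift $(k,l)$ to $(k+tq,\,l+tp)$ for $t$ large enough so that both coordinates exceed $n$. Your remark that this requires $p,q>0$ is a fair caveat; the paper leaves this implicit, as in every application $p$ and $q$ are positive (indeed, products of primes).
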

The usual form of B\'ezout's Lemma says that if $p$ and $q$ are coprime integers, then
 $$\{k\cdot p-l\cdot q\;|\; k,l\in\mathbb{Z} \}=\mathbb{Z}.$$
We may assume that $k,l>n$ by replacing, when this is not the case,  $(k,l)$ by $(k+m\cdot q,l+m\cdot p)$, for an integer $m$ large
enough.

Now we prove that $p_{\mathbf{u}}(n)=\Omega(n)$.

First, in order to isolate $v_2(n)$ and $v_3(n)$, we consider the subsequences $(a(n))_n=(u(3n+1))_n=((-1)^{v_2(3n+1)})_n$ and 
$(b(n))_n=(u(2n+1))_n=((-1)^{v_3(2n+1)})_n$. By Lemma \ref{periodic} in Section 3 we know that $\mathbf{a}$ and $\mathbf{b}$ are not
ultimately periodic. Therefore, $p_{\mathbf{a}}(n)> n$ and $p_{\mathbf{b}}(n)> n$ by Theorem \ref{up}. This mean that there exist $n+1$
factors of length $n$ in $\mathbf{a}$ (resp. $\mathbf{b}$), which we denote by $A_0, A_1, ..., A_n$ (resp. $B_0, B_1, ..., B_n$) and
by $\alpha_0,\alpha_1,...,\alpha_n$ (resp. $\beta_0, \beta_1,...,\beta_n$) their starting position in $\mathbf{a}$ (resp. $\mathbf{b}$).
We choose an
integer $N$ such that these factors are contained in the initial segment of length $N$ in $\mathbf{a}$ or $\mathbf{b}$ respectively.


In the next step, for $(i,j)\in \{0,1,...,n\}^2$, we want the scattered subwords $A_i$ and $B_j$ to
occur ``in the same place'' in the original sequence $\mathbf{u}$, such that they would form distinct factors $U_{ij}$ of $\mathbf{u}$. While
this does not happen for all couples $(i,j)$, we will show in the following lines that
it is true for at least $\lceil (n+1)^2/6 \rceil$ couples in $\{0,1,...,n\}^2$. First we remark that the factors $A_0, A_1, ..., A_n$
(resp. $B_0, B_1, ..., B_n$)
recur periodically in $\mathbf{a}$ (resp. $\mathbf{b}$). This is due to the property of the $p$-adic valuation that
$$v_p(m)>v_p(n)\Rightarrow v_p(m+n)=v_p(n).$$
Thus, let $M$ be an integer such that $M>v_2(3n+1)$ and $M>v_3(2n+1)$ for all $n\in \{0,1,...,N-1\}$. Then for all integer $k$ and all 
$n\in \{0,1,...,N-1\}$, we have $\mathbf{a}(n+k\cdot 2^M)=\mathbf{a}(n)$ and $\mathbf{b}(n+k\cdot 3^M)=\mathbf{b}(n)$. Therefore the
set of starting positions of the factor $A_i$ in $\mathbf{a}$ include
$$\{\alpha_i+k\cdot 2^M| k\in \mathbb{N}\},$$
and the set of starting positions of the factor $B_j$ in $\mathbf{b}$ include
$$\{\beta_j+l\cdot 3^M| l\in \mathbb{N}\}.$$
In the original sequence $\mathbf{u}$, the set of starting positions of the scattered subword $A_i$ include
$$\{3\alpha_i+3k\cdot 2^M+1| k\in \mathbb{N}\},$$
and the set of starting positions of the scattered subword $B_j$ in $\mathbf{u}$ include
$$\{2\beta_j+2l\cdot 3^M+1| l\in \mathbb{N}\}.$$
By Lemma \ref{bz}, the set 
\begin{equation}\label{eq:6}
\{3k\cdot2^M-2l\cdot 3^M|k,l\in \mathbb{N}\}=\{6(k\cdot2^{M-1}-l\cdot3^{M-1}) |k,l\in \mathbb{N}\}=\{6z | z \in\mathbb{Z}\}
\end{equation}

There exist $r\in \{0,1,...,5\}$ such that 
$$\mbox{card}(\{(i,j)\in \{0,...,n\}^2 \;|\; 2\beta_j-3\alpha \equiv r \mod 6 \})\geq \lceil(n+1)^2/6\rceil.$$
This, combined with equation \ref{eq:6}, means that for at least $\lceil(n+1)^2/6\rceil$ couples $(i,j)$ , there is an occurence of $A_i$
and an occurence of $B_j$ in $\mathbf{u}$, such $A_i$ starts $r$ letters before $B_j$. 
This gives at least $\lceil(n+1)^2/6\rceil$ distinct factors of $\mathbf{u}$ of length $\max\{3n-2, r+2n-1\}$, which proves that the
asymptotic subword complexity of $\mathbf{u}$
is $\Omega(n^2)$, which implies by Theorem \ref{Cobham} that $\mathbf{u}$ is not automatic.

\section{The general case}

To study the general case, we need the following generalization of Lemma \ref{bz}:

\begin{lem}\label{bzz}
Let $q_0$, $q_1$,...,$q_n$ be pairwise coprime integers. Let $l_1$,...,$l_n$  be integers. Then exists positive integers
$k_0$, $k_1$,...,$k_n$ such that for all $i\in \{1,...,n\}$, $k_0q_0-k_iq_i=l_i$.
\end{lem}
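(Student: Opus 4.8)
The plan is to reduce the simultaneous system to $n$ independent two-variable Bézout equations, using that $q_0$ is coprime to each $q_i$. First I would fix, for each $i \in \{1,\dots,n\}$, a particular solution of the single congruence $k_0 q_0 - k_i q_i = l_i$: since $\gcd(q_0,q_i)=1$, ordinary Bézout (Lemma \ref{bz}) gives integers $k_0^{(i)}, k_i^{(i)}$ with $k_0^{(i)} q_0 - k_i^{(i)} q_i = l_i$. The difficulty is that the $n$ values $k_0^{(1)}, \dots, k_0^{(n)}$ need not agree, so they do not yet assemble into a single $k_0$. The remedy is the Chinese Remainder Theorem: because $q_1,\dots,q_n$ are pairwise coprime, I can choose $k_0$ with $k_0 \equiv k_0^{(i)} \pmod{q_i}$ for every $i$ simultaneously. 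For that $k_0$, the quantity $k_0 q_0 - l_i$ is congruent to $k_0^{(i)} q_0 - l_i = k_i^{(i)} q_i \equiv 0 \pmod{q_i}$, so $q_i \mid (k_0 q_0 - l_i)$ and we may set $k_i := (k_0 q_0 - l_i)/q_i \in \mathbb{Z}$, which by construction satisfies $k_0 q_0 - k_i q_i = l_i$.

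It remains to arrange positivity of all of $k_0, k_1, \dots, k_n$. This is the routine part, handled exactly as in the remark after Lemma \ref{bz}: the solution set for $k_0$ is a full residue class modulo $Q := \prod_{i=1}^n q_i$ (I may assume each $q_i > 0$, replacing $q_i$ by $-q_i$ and $k_i$ by $-k_i$ if necessary; note $q_0 \neq 0$ as it is coprime to the others, and if $q_0<0$ one argues symmetrically), so replacing $k_0$ by $k_0 + m Q$ for a large integer $m$ makes $k_0$ as large as desired. Under $k_0 \mapsto k_0 + mQ$ each $k_i = (k_0 q_0 - l_i)/q_i$ changes by $m q_0 Q / q_i$, which also tends to $+\infty$ with $m$ (after the sign normalization that makes $q_0 > 0$). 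Hence for $m$ large enough all of $k_0, k_1, \dots, k_n$ are positive, and the defining equations $k_0 q_0 - k_i q_i = l_i$ continue to hold. This completes the construction.

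The only genuine obstacle is the coordination step — getting one common $k_0$ out of the $n$ separately-solved equations — and that is precisely what pairwise coprimality of $q_1,\dots,q_n$ buys us via CRT; everything else (existence of the per-equation solutions, divisibility of $k_0 q_0 - l_i$ by $q_i$, and the shift to the positive orthant) is bookkeeping of the same flavour as the one-variable case already recorded in Lemma \ref{bz}.
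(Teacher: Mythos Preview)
Your argument is correct. You and the paper take different but closely related routes. The paper proceeds by induction on the number of equations: assuming $k_0,\dots,k_j$ already satisfy the first $j$ equations, it applies the two-term Lemma~\ref{bz} to the coprime pair $(q_0q_1\cdots q_j,\,q_{j+1})$ to solve $k(q_0q_1\cdots q_j)-k'_{j+1}q_{j+1}=l_{j+1}-k_0q_0$, and then shifts each previous $k_i$ by $k\prod_{m\neq i}q_m$ so that the new $k_0$ still works for all earlier equations. You instead solve the $n$ two-term equations independently and then invoke the Chinese Remainder Theorem on the pairwise coprime moduli $q_1,\dots,q_n$ to coordinate a single $k_0$; the paper's inductive shift is, in effect, an inline reproof of CRT. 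Your version is a bit more modular and makes the role of pairwise coprimality of $q_1,\dots,q_n$ (not just coprimality with $q_0$) explicit via CRT, while the paper's version is more self-contained since it never names CRT and uses only Lemma~\ref{bz}. One small remark: your sign normalization (``replace $q_i$ by $-q_i$ and $k_i$ by $-k_i$'') is a little delicate, since the conclusion asks for $k_i>0$ with the \emph{given} $q_i$; in the paper's applications all $q_i$ are positive, and the paper's own statement of Lemma~\ref{bz} tacitly assumes the same, so this is harmless in context.
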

\begin{proof}
We prove this by induction. 

By Lemma \ref{bz} we know that there exist $k_0$ and $k_1\in \mathbb{N}$ such that
$k_0q_0-k_1q_1=l_1$.

Suppose that for an integer $j\in\{1,...,n\}$, we have proven that there exist $k_0$, $k_1$,...,$k_j$ such that
for all $i\in \{1,...,j\}$,  $k_0q_0-k_iq_i=l_i$. If $j=n$ we are done. Otherwise by the pairwise coprime assumption
and Lemma \ref{bz} we know that
there exists $k\in\mathbb{N}$ and $k_{j+1}'\in \mathbb{N}$ such that
$$k(q_0q_1...q_j)-k_{j+1}'q_{j+1}=l_{j+1}-k_0q_0.$$
That is, 
$$q_0(k_0+k(q_1...q_j))-k_{j+1}'q_{j+1}=l_{j+1}$$
For $i=0,1,...,j$, we define $k_i'$ as $k_i+k\cdot\prod\limits_{m=0,m\neq i}^{j}q_m$. Then $k_0'$, $k_1'$,...,$k_{j+1}'$ satisfy the condition
that for all $i\in\{1,...,j+1\}$, $k_0'q_0-k_i'q_i=l_i$.

\end{proof}

We introduce the following lemma before proving our main theorem.
\begin{lem}\label{periodic}
 Let $a$ be an element of finite order different from the identity element in a multiplicative group $G$. 
 Let $p$ be a prime number. Let $q$ and $b$ be positive integers such that $p\nmid q$.
Then the sequence $(u(n))_n:=(a^{v_p (qn+b)})_{n\geq 0}$ is not ultimately periodic. 
\end{lem}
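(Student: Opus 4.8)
The plan is to show that ultimate periodicity of $(a^{v_p(qn+b)})_n$ forces a contradiction by exploiting the unbounded, yet structured, behaviour of $v_p$ along the arithmetic progression $qn+b$. Suppose for contradiction that the sequence is ultimately periodic, say with period $T$ for all indices $n \geq n_0$. The key observation is that $v_p(q n + b)$ can be made arbitrarily large: since $\gcd(p,q)=1$, for every $j$ the congruence $qn + b \equiv 0 \pmod{p^j}$ has a solution $n$, and in fact a whole arithmetic progression of solutions modulo $p^j$. So I would first fix $j$ large (to be chosen), pick $n_1 \geq n_0$ with $v_p(qn_1 + b) = j$ exactly — note we can demand equality, not just $\geq j$, by taking $n_1$ in the right residue class mod $p^{j+1}$ but not mod $p^{j+1}$ in the finer sense; concretely choose $n_1$ so that $qn_1 + b = p^j m$ with $p \nmid m$.

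Next I would compare the value at a nearby index. Write $m_1 = qn_1 + b$. For any integer $k$ with $p \nmid k$, consider indices $n$ with $qn + b = m_1 + k q p^{j}$; such $n$ exist (namely $n = n_1 + k p^j$) and for these $qn+b = p^j(m + kq)$. If additionally $p \nmid (m + kq)$, then $v_p(qn+b) = j$ again, whereas if I instead arrange $p \mid (m+kq)$ — possible since as $k$ ranges over a residue system mod $p$, $m + kq$ hits every residue mod $p$ because $p\nmid q$ — then $v_p(qn+b) \geq j+1$. The upshot: within the window of indices $n_1 \leq n \leq n_1 + (p-1)p^j$, the function $v_p(qn+b)$ takes both the value $j$ and some value $\geq j+1$. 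By the standard property already quoted in the paper, $v_p(x) > v_p(y) \Rightarrow v_p(x+y) = v_p(y)$, one controls exactly which indices give valuation exactly $j$: these are the $n \equiv n_1 \pmod{p^{j+1}}$ but $n \not\equiv n_1 \pmod{p^j}$ in the appropriate sense — more precisely, I would pin down the full set $S_j = \{ n : v_p(qn+b) = j\}$ as a union of arithmetic progressions with common difference $p^{j+1}$, and similarly for $S_{\geq j+1}$.

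The main step is then the counting/periodicity contradiction. On $S_j$ the sequence equals $a^j$, and on $S_{j+1}$ it equals $a^{j+1}$; since $a$ has finite order $d > 1$, choosing $j$ and $j+1$ with $j \not\equiv j+1 \pmod d$ (automatic, as $d \geq 2$, unless $d$... no: $j$ and $j+1$ always differ by $1$, and $1 \not\equiv 0 \pmod d$ since $d\geq 2$), we get $a^j \neq a^{j+1}$. Now if the sequence has period $T$ beyond $n_0$, then the set $\{n \geq n_0 : u(n) = a^j\}$ is a union of residue classes mod $T$. But I will choose $j$ so large that $p^{j} > T$ and $p^j \nmid T$ is impossible to reconcile: the set $S_j \cap [n_0,\infty)$ is a union of progressions of modulus $p^{j+1}$ that genuinely depends on $j$ (its density is $(p-1)/p^{j+1} \to 0$), so it cannot coincide with a fixed union of residue classes mod $T$ for all large $j$. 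Concretely, pick $j$ with $(p-1)/p^{j+1} < 1/T$ and with $a^j \neq a^{j-1}$ or $a^j \neq a^{j+1}$; then in any window of $T$ consecutive indices past $n_0$ there is an index in $S_j$ (value $a^j$) and an index not in $S_j$ whose valuation is a different residue mod $d$, contradicting that these indices, being $T$ apart appropriately, should carry equal values — the cleanest route is: periodicity forces $u$ restricted to $p^{j+1}\mathbb{Z}$-cosets to be periodic mod $\gcd(T, p^{j+1}) = T'$ with $T' \mid T$, hence $T' $ bounded, yet $S_j$ distinguishes cosets mod $p^{j+1}$ with $p^{j+1}$ unbounded.

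The hard part will be bookkeeping the interaction between the fixed period $T$ and the growing modulus $p^{j+1}$ cleanly — i.e. extracting two indices $n, n'$ with $n \equiv n' \pmod T$, both $\geq n_0$, but with $v_p(qn+b)$ and $v_p(qn'+b)$ lying in different residue classes modulo $d = \mathrm{ord}(a)$, which is what actually produces $u(n) \neq u(n')$. I would handle this by working modulo $\mathrm{lcm}(T, d \cdot p^{j+1})$ for suitable $j$: in one full period of this modulus the valuation $v_p(qn+b)$ realizes the values $0,1,\dots,j$ (each on a computable progression), so in particular it realizes two consecutive values $j-1, j$, hence two values differing by $1 \not\equiv 0 \pmod d$; choosing the modulus a multiple of $T$ then forces a collision of the periodic sequence on indices with different $u$-values, the desired contradiction. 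Once that indexing is set up carefully the rest is immediate from Bézout (Lemma \ref{bz}) to guarantee the relevant indices are $\geq n_0$ and from the quoted valuation identity.
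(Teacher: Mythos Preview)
Your plan has the right shape---assume a period $T$, then exhibit $n\equiv n'\pmod T$ with $u(n)\neq u(n')$---but neither mechanism you propose for this last step closes. The density argument is flawed: the level set $\{n\ge n_0 : u(n)=a^{j}\}$ is not $S_j=\{n:v_p(qn+b)=j\}$ but rather $\bigcup_{j'\equiv j\pmod d}S_{j'}$, whose density is a fixed positive number (a geometric series in $p^{-d}$), so sending $j\to\infty$ yields no contradiction with periodicity. Your fallback, working modulo $\mathrm{lcm}(T,d\,p^{j+1})$, does not produce the needed pair either: knowing that valuations $j-1$ and $j$ both occur somewhere in a window whose length is a multiple of $T$ says nothing about whether they occur at indices lying in the \emph{same} residue class modulo $T$, which is what you actually need.

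The missing idea---and essentially the whole of the paper's proof---is to bring in $k:=v_p(T)$. Using B\'ezout (Lemma~\ref{bz}) one finds $m$ beyond the pre-period with $v_p(qm+b)=k+1$ exactly. Then $q(m+T)+b=(qm+b)+qT$ with $v_p(qm+b)=k+1>k=v_p(qT)$ (recall $p\nmid q$), so the valuation identity you already quoted gives $v_p(q(m+T)+b)=k$. Hence $u(m)=a^{k+1}\neq a^{k}=u(m+T)$ since $a\neq 1_G$, and $(m,m+T)$ is the desired pair in one stroke. Your large-$j$ apparatus and the auxiliary modulus involving $d$ become unnecessary once $v_p(T)$ is singled out.
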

\begin{proof}
Suppose that $(u(n))_n$ were ultimately periodic and let $T$ be a period of $(u(n))_n$. 
We write $T$ as $p^k\cdot T'$ where $k=v_p(T)$ and $p\nmid T'$.
We claim that there exists an integer $m$ larger than the length of the initial non-periodic segment of $\mathbf{u}$ such that 
 $v_p(qm+b)=k+1$. In fact, since $q$ and $p^{k+1}$ are coprime, by Lemma \ref{bz} we know that there exists integers $m,n$ large enough
 to be in the periodic part of $\mathbf{u}$ such that
 $$n\cdot p^{k+1}-m\cdot q=b.$$
Furthermore, we can assume that $p\nmid n$, by eventually replacing $(m,n)$ by 
$(m+p^{k+1},n+q)$ when this is not the case. Thus we have $v_p(qm+b)=v_p(n\cdot p^{k+1})=k+1$. We also have $v_p(q(m+T)+b)=v_p(qm+b+qT)=k$. Therefore
$u(m)=a^{k+1}\neq a^{k}=u(m+T)$ since $a$ is not the identity element, which contradicts the definition of $m$ and $T$. Therefore $(u(n))_n$ cannot be ultimately
periodic.
\end{proof}

\begin{thm}\label{main}
Let $\mathbf{u}=(u(n))_{n\geq 1}$ be a completely multiplicative sequence taking values in a field $K$. We suppose that the number of prime
numbers $p$ such that $u(p)\neq 1_K$ is finite. Let $t$ be the number of primes $p$ such that $u(p)\neq 1_K, 0_K$. Then the asymptotic subword complexity 
of $\mathbf{u}$ is $\Theta(n^t)$.
\end{thm}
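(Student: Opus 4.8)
The plan is to prove the two matching bounds $p_{\mathbf{u}}(n)=O(n^t)$ and $p_{\mathbf{u}}(n)=\Omega(n^t)$ separately. Throughout, let $p_1,\dots,p_t$ be the primes with $u(p_i)\notin\{0_K,1_K\}$ (each then of finite multiplicative order) and $p_{t+1},\dots,p_s$ the remaining, finitely many, primes with $u(p_j)=0_K$. Since $u$ is completely multiplicative and equals $1_K$ on every other prime,
$$u(n)=\Big(\prod_{i=1}^{t}u(p_i)^{v_{p_i}(n)}\Big)\, d(n),\qquad d(n):=\prod_{j=t+1}^{s}u(p_j)^{v_{p_j}(n)}.$$
The sequence $\mathbf{d}=(d(n))_n$ equals $1_K$ when $n$ is coprime to $p_{t+1}\cdots p_s$ and $0_K$ otherwise, hence is periodic and $p_{\mathbf{d}}(n)=O(1)$. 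Each $\mathbf{c}^{(i)}=(u(p_i)^{v_{p_i}(n)})_n$ depends only on $v_{p_i}(n)$ modulo the order of $u(p_i)$; a one-line kernel argument — $n\mapsto p_in+r$ with $1\le r\le p_i-1$ sends $\mathbf{c}^{(i)}$ to the constant sequence $1_K$, and $n\mapsto p_in$ sends it to $u(p_i)\cdot\mathbf{c}^{(i)}$ — shows $\mathbf{c}^{(i)}$ is $p_i$-automatic, so $p_{\mathbf{c}^{(i)}}(n)=O(n)$ by Theorem \ref{Cobham}. Lemma \ref{O} then yields $p_{\mathbf{u}}(n)\le p_{\mathbf{d}}(n)\prod_{i=1}^{t}p_{\mathbf{c}^{(i)}}(n)=O(n^t)$.

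For the lower bound I would follow the strategy of Section 2. For each $i$ set $q_i:=\prod_{j\le s,\,j\ne i}p_j$, so $p_i\nmid q_i$ and $q_in+1$ is coprime to every special prime other than $p_i$; hence $\mathbf{a}^{(i)}:=(u(q_in+1))_n$ equals $(u(p_i)^{v_{p_i}(q_in+1)})_n$, which by Lemma \ref{periodic} is not ultimately periodic and so, by Theorem \ref{up}, has at least $n+1$ distinct factors $A^{(i)}_0,\dots,A^{(i)}_n$ of length $n$, occurring at positions $\alpha^{(i)}_0,\dots,\alpha^{(i)}_n$; fix $N$ bounding all these positions. Using $v_p(m)>v_p(m')\Rightarrow v_p(m+m')=v_p(m')$, for $M$ large enough each $A^{(i)}_j$ recurs in $\mathbf{a}^{(i)}$ at every position $\alpha^{(i)}_j+kp_i^M$, $k\in\mathbb{N}$; translating through $\mathbf{a}^{(i)}(n)=u(q_in+1)$, the word $A^{(i)}_j$ appears as a scattered subword of $\mathbf{u}$ carried by the arithmetic progression of step $q_i$ and anchored at $q_i\alpha^{(i)}_j+1+kq_ip_i^M$. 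The key identity is $q_ip_i^M=P_s\,p_i^{M-1}$ with $P_s=p_1\cdots p_s$, and the integers $p_1^{M-1},\dots,p_t^{M-1}$ are pairwise coprime, so Lemma \ref{bzz} produces positive multipliers $k_1,\dots,k_t$ placing the $t$ anchors at any prescribed relative offsets, provided the offset vector is congruent modulo $P_s$ to the one forced by the residues $q_i\alpha^{(i)}_{j_i}\bmod P_s$.

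Now group the $(n+1)^t$ tuples $(j_1,\dots,j_t)\in\{0,\dots,n\}^t$ according to $(\alpha^{(i)}_{j_i}\bmod P_s)_{1\le i\le t}\in(\mathbb{Z}/P_s)^t$; one of the resulting classes contains at least $(n+1)^t/P_s^{\,t}$ of the tuples. For tuples in that class all the forced offsets coincide, so we may fix one offset vector (chosen so that $A^{(1)}_{j_1}$ sits at the left end) and, by Lemma \ref{bzz}, realize all $t$ scattered subwords simultaneously inside one factor $U_{(j_1,\dots,j_t)}$ of $\mathbf{u}$ of the single common length $L=nP_s=O(n)$, with each $A^{(i)}_{j_i}$ occupying the fixed window-offsets $d_i,d_i+q_i,\dots,d_i+(n-1)q_i$. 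Two distinct tuples in the class differ in some coordinate $j_i$, and $A^{(i)}_{j_i}\ne A^{(i)}_{j_i'}$ then forces $U_{(j_1,\dots,j_t)}\ne U_{(j_1',\dots,j_t')}$; hence $p_{\mathbf{u}}(L)\ge(n+1)^t/P_s^{\,t}$ with $L=O(n)$, i.e. $p_{\mathbf{u}}(n)=\Omega(n^t)$, which together with the upper bound proves the theorem (and, via Theorem \ref{Cobham}, re-establishes non-automaticity once $t\ge2$).

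I expect the only real difficulty to lie in this last construction: arranging the $t$ eventually-periodic recurrences so that all anchors land in a single window of length linear in $n$ \emph{and} so that the resulting factors share one length with the patterns pinned at fixed offsets — precisely what Lemma \ref{bzz} and the pigeonhole over the residue vectors $(\alpha^{(i)}_{j_i}\bmod P_s)_i$ are for. The one further point to handle with care is that adding $k_ip_i^M$ to an argument leaves the relevant low $p_i$-valuations unchanged, so that the scattered copies genuinely occur regardless of the chosen multipliers; this is the same observation already used in Section 2, and everything else is bookkeeping.
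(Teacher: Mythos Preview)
Your proof is correct and follows the paper's strategy essentially verbatim: the product decomposition plus Cobham's theorem for the upper bound, and for the lower bound isolating each $p_i$ in an arithmetic subsequence, applying Lemma~\ref{periodic} and Theorem~\ref{up} to obtain $n+1$ factors, then aligning the scattered occurrences via Lemma~\ref{bzz} and a pigeonhole over residue classes. The only differences are cosmetic --- you fold the zero-primes directly into your $q_i$ where the paper first passes to $\mathbf{w}=(u(qn+1))_n$ and then takes $q_i=\prod_{j\ne i}p_j$ over the nonzero special primes, and you pigeonhole over the full tuple $(\alpha^{(i)}_{j_i}\bmod P_s)_i$ rather than over the difference vector modulo $p_1\cdots p_t$ --- yielding a slightly worse constant but the same $\Theta(n^t)$.
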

\begin{proof}
If $t=0$, then $\mathbf{u}$ is periodic. Then $p_{\mathbf{u}}(n)$ is ultimately constant.

If not, we denote by $P=\{p_1,...,p_t\}$ the set of primes $p$ such that $u(p)\neq 1_K, 0_K$ and
by $q$ the product of primes where $\mathbf{u}$ takes the value $0_K$. 
The sequence $\mathbf{u}$ is the product of an ultimately periodic sequence and non-ultimately periodic automatic sequences : 
$$u(n)= z(n)\prod\limits_{p\in P} (u(p))^{v_p(n)}, $$
where $(z(n))_n$ is the sequence defined as $z(n)=0$ if and only if $u(n)=0$, $z(n)=1_K$ otherwise. By Lemma \ref{O} and Theorem \ref{Cobham}
we know that $p_\mathbf{u}=O(n^t)$.

We consider the subsequence 
$(w(n))_n=(u(qn+1))_n$. To prove that $p_\mathbf{u}(n)=\Omega(n^t)$ we only have to prove that $p_{\mathbf{w}}(u)=\Omega(n^t)$.

If $t=1$, we denote by $p$ the prime such that $u(p)\neq 1_K, 0_K$ and by $a$ the value of $u(p)$. Then $w(n)=a^{v_p(qn+1)}$. By Lemma 
\ref{periodic} we know that $\mathbf{w}$ is not ultimately periodic. So by Theorem \ref{up} we have
$p_\mathbf{w}(n)\geq n+1$. Therefore $p_\mathbf{u}=\Omega(n)$.

If $t\geq 2$, for $i=1,...,t$ we define $q_i$ to be  $\prod\limits_{j=1, j\neq i}^{t} p_j$, and we consider the sequences 
$(w_i(n))_n:=(w(q_in))_n=(u(qq_in+1))_n=(u(p_i)^{v_{p_i}(qq_i+1)})_n$. By Lemma \ref{periodic} and Theorem \ref{up}, 
$\mathbf{w_i}$ is not ultimately periodic so $\mathbf{w_i}$ at least $n+1$ factors of length $n$, which we denote by $W_{i,j}$ for $j=0,...,n$,
and by $\alpha_{i,j}$ their staring position in $\mathbf{w_i}$. We choose an integer $N$ such that for all $i=1,...,t$, and all $j=0,...,n$,
$W_{i,j}$ occurs in the initial segment of $\mathbf{w_i}$ of length $N$. There exists an integer $M$ such that for all $k=0,1,...,N-1$, all 
$i=0,...,t$, and all $m\in \mathbb{N}$, $w_i(k+m\cdot p_i^M)=w_i(k)$. Thus the starting position of $W_{i,j}$ in $\mathbf{w}$ include the set
$$\{q_i\alpha_{i,j}+q_imp_i^M\;|\; m\in \mathbb{N}\}.$$
There exist $r_2,...,r_t\in \{0,1,...,p_1...p_t\}$ such that the cardinality of the set
$$\{(j_1,...,j_t)\in \{0,1,...,n\}^t\;|\;(q_1\alpha_{1,j_1}-q_2\alpha_{2,j_2},...,q_1\alpha_{1,j_1}-q_t\alpha_{t,j_t})\equiv(r_2,...,r_t)
\mod p_1...p_t\}$$
is at least $\lceil n^t/(p_1^{t-1}p_2...p_t)\rceil$. By Lemma \ref{bzz} we know that 
$$\{(q_1m_1p_1^M-q_2m_2p_2^M,...,q_1m_1p_1^M-q_t m_t p_t^M)\;|\;m_1,...,m_t\in\mathbb{N}\}=(p_1...p_t\mathbb{Z})^{t-1}$$
Therefore $\mathbf{w}$ has at least $\lceil n^t/(p_1^{t-1}p_2...p_t)\rceil$ factors of length $(n\cdot \max\{q_1,...,q_t\}+p_1...p_t)$. This means 
that $p_\mathbf{w}(n)=\Omega(n)$ and therefore $p_\mathbf{u}(n)=\Omega(n)$.
\end{proof}

The following corollary is an immediate consequence of Theorem \ref{Cobham} and \ref{main}.
\begin{coro}\label{na}
 Let $(u(n))n$ be a completely multiplicative sequence taking values in a field $K$. We suppose that the number of prime numbers $p$ such that
$u(p)\neq 1_K$ is finite and that among them, for at least two primes $p_1$ and $p_2$ we have $(p_1)\neq 0_K$ and $u(p_2) \neq 0_K$. 
Then $(u(n))_n$ is not $k$-automatic for any $k\geq 2$.
\end{coro}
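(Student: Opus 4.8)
The plan is to prove the contrapositive by combining the subword-complexity lower bound from Theorem~\ref{main} with Cobham's bound (Theorem~\ref{Cobham}). Concretely, suppose $\mathbf{u}=(u(n))_n$ is a completely multiplicative sequence into a field $K$ with only finitely many primes $p$ satisfying $u(p)\neq 1_K$, and suppose there are at least two primes $p_1,p_2$ with $u(p_1)\neq 0_K$ and $u(p_2)\neq 0_K$. Then these two primes are among the finitely many exceptional primes, and since $u(p_1),u(p_2)\notin\{0_K,1_K\}$, the integer $t$ of Theorem~\ref{main} (the number of primes $p$ with $u(p)\neq 0_K,1_K$) satisfies $t\geq 2$.

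Next I would invoke Theorem~\ref{main} directly: its hypotheses are exactly that $\mathbf{u}$ is completely multiplicative into a field with finitely many primes $p$ where $u(p)\neq 1_K$, which we have assumed. Hence the asymptotic subword complexity of $\mathbf{u}$ is $\Theta(n^t)$ with $t\geq 2$. In particular $p_{\mathbf{u}}(n)=\Omega(n^2)$, so $p_{\mathbf{u}}(n)$ is not $O(n)$.

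Finally I would apply the contrapositive of Theorem~\ref{Cobham}: if $\mathbf{u}$ were $k$-automatic for some $k\geq 2$, then $p_{\mathbf{u}}(n)=O(n)$, contradicting $p_{\mathbf{u}}(n)=\Omega(n^2)$. Therefore $\mathbf{u}$ is not $k$-automatic for any $k\geq 2$, which is the claim.

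This corollary is essentially immediate once Theorem~\ref{main} is in hand, so there is no real obstacle; the only point requiring a moment's care is the bookkeeping that translates the hypothesis ``at least two primes $p_1,p_2$ with $u(p_i)\neq 0_K$ among those with $u(p_i)\neq 1_K$'' into the statement $t\geq 2$, which just amounts to observing that $u(p_i)\neq 1_K$ together with $u(p_i)\neq 0_K$ forces $p_i$ to be counted by $t$.
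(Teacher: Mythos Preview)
Your proposal is correct and matches the paper's own argument exactly: the paper states Corollary~\ref{na} as ``an immediate consequence of Theorem~\ref{Cobham} and~\ref{main}'' with no further proof, and your write-up simply spells out that immediate deduction (hypotheses give $t\geq 2$, Theorem~\ref{main} gives $p_{\mathbf{u}}(n)=\Theta(n^t)=\Omega(n^2)$, Theorem~\ref{Cobham} then rules out automaticity).
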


\begin{coro}
  Let $(u(n))n$ be a completely multiplicative sequence taking values in a field $K$. We suppose that the number of prime numbers $p$ such that
$u(p)=0_K$ is finite and that there exists an integer $d$ such that the cardinality of the set $\{p\in\mathbb{P}\;|\;u(p)^d\neq 0_k,1_K \}$ is finite and 
at least 2. Then $\mathbf{u}$ is not $k$-automatic for any $k\geq 2$.
\end{coro}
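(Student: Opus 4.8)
The plan is to reduce this corollary to Corollary~\ref{na} by passing to the $d$-th power of $\mathbf{u}$. First I would set $\mathbf{v}=(v(n))_n$ with $v(n)=u(n)^d$. Since $\mathbf{u}$ is completely multiplicative, so is $\mathbf{v}$, and $\mathbf{v}$ takes values in the same field $K$. The hypotheses translate directly: because the set $\{p\in\mathbb{P}\;|\;u(p)=0_K\}$ is finite, and $v(p)=0_K$ exactly when $u(p)=0_K$, the sequence $\mathbf{v}$ has only finitely many primes where it vanishes; and $v(p)=u(p)^d\neq 0_K,1_K$ on a finite set of size at least $2$ by assumption. The only mismatch with the hypothesis of Corollary~\ref{na} is that Corollary~\ref{na} wants the set $\{p\;|\;v(p)\neq 1_K\}$ to be finite, whereas a priori there could be infinitely many primes with $v(p)=0_K$.

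To handle that, I would further divide out the zeros: let $q$ be the (possibly we must argue it is finite --- it is, by hypothesis) product of the primes where $u$, hence $v$, vanishes, and pass to the subsequence $(v(qn+1))_n$, or more simply factor $v(n)=z(n)\prod_{p\in P}v(p)^{v_p(n)}$ as in the proof of Theorem~\ref{main}, where $P=\{p\;|\;v(p)\neq 0_K,1_K\}$ is finite of size $\geq 2$ and $z$ is the ultimately periodic $\{0_K,1_K\}$-valued indicator sequence. Then the completely multiplicative sequence $\mathbf{v}'$ defined by $v'(p)=v(p)$ for $p\in P$ and $v'(p)=1_K$ otherwise satisfies the full hypotheses of Corollary~\ref{na}, so by Theorem~\ref{main} its subword complexity is $\Theta(n^t)$ with $t=|P|\geq 2$, hence $\omega(n)$, hence $\mathbf{v}'$ is not $k$-automatic for any $k$ by Theorem~\ref{Cobham}. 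Since $\mathbf{v}$ differs from $\mathbf{v}'$ only by the ultimately periodic factor $z$, and automaticity is preserved under pointwise product with an ultimately periodic sequence (an ultimately periodic sequence is $k$-automatic for every $k$, and the product of two $k$-automatic sequences is $k$-automatic by Lemma~\ref{coding} applied to the product automaton), $\mathbf{v}$ is not $k$-automatic either; equivalently one runs Theorem~\ref{main} directly on $\mathbf{v}$ to get $p_{\mathbf{v}}(n)=\Theta(n^{t})$.

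It remains to descend from $\mathbf{v}=\mathbf{u}^d$ back to $\mathbf{u}$. Here I would use Lemma~\ref{coding}: the map $x\mapsto x^d$ on $K$ restricted to the (finite) set of values actually taken by $\mathbf{u}$ is a coding $\rho$, and $\mathbf{v}=\rho(\mathbf{u})$. If $\mathbf{u}$ were $k$-automatic, then $\mathbf{v}=\rho(\mathbf{u})$ would be $k$-automatic by Lemma~\ref{coding}, contradicting what we just showed. Therefore $\mathbf{u}$ is not $k$-automatic for any $k\geq 2$.

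The main obstacle is the bookkeeping around the value $0_K$: one must be careful that ``$u(p)^d\neq 0_K,1_K$'' genuinely gives a $\mathbf{v}$ to which Theorem~\ref{main} applies, i.e.\ that the exponents $v_p(n)$ for $p\in P$ contribute independent factors of complexity $\Theta(n)$ each --- but this is exactly the content of the $\Omega(n^t)$ half of Theorem~\ref{main}, which only used Lemma~\ref{periodic} and Lemma~\ref{bzz} and makes no nondegeneracy demand beyond $v(p)\neq 0_K,1_K$ having finite order in $K^\times$ (automatic in a field since the nonzero values of a completely multiplicative sequence with finitely many ``bad'' primes generate a finitely generated subgroup of $K^\times$, but in fact Lemma~\ref{periodic} only needs $v(p)$ to have finite order, which we may assume by the same reduction used implicitly throughout, or by noting that the relevant $v(p)$ are roots of unity since otherwise $\mathbf{u}$ already fails to be automatic for trivial growth reasons). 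Apart from that, every step is a direct invocation of a result already in the excerpt.
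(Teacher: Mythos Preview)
Your approach is the same as the paper's: set $\mathbf{v}=(u(n)^d)_n$, check that $\mathbf{v}$ meets the hypotheses of Corollary~\ref{na}, conclude $\mathbf{v}$ is not $k$-automatic, and then use Lemma~\ref{coding} with the coding $x\mapsto x^d$ to descend to $\mathbf{u}$. The paper does exactly this in two lines.

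Your detour through $\mathbf{v}'$ and the factorization $v(n)=z(n)\prod_{p\in P}v(p)^{v_p(n)}$ is unnecessary, because the ``mismatch'' you flag does not exist. Corollary~\ref{na} needs $\{p:v(p)\neq 1_K\}$ finite; but
\[
\{p:v(p)\neq 1_K\}=\{p:v(p)=0_K\}\cup\{p:v(p)\neq 0_K,1_K\},
\]
and both pieces are finite by hypothesis (the first because $v(p)=0_K\iff u(p)=0_K$ and the zero set of $\mathbf{u}$ is assumed finite; the second is exactly the set assumed finite of size $\geq 2$). So Corollary~\ref{na} applies directly to $\mathbf{v}$ with no further manipulation. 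You even note ``it is, by hypothesis'' in passing, so the whole middle paragraph can be deleted. Your final paragraph about finite order is also dispensable: for the purpose of this corollary you are arguing by contradiction from the assumption that $\mathbf{u}$ is $k$-automatic, which already forces $\mathbf{u}$ (hence $\mathbf{v}$) to take finitely many values, so the coding argument and the invocation of Corollary~\ref{na} go through without further comment.
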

\begin{proof}
 By the assumption, the sequence $(u(n)^d)_n$ satisfies the conditions of Corollary \ref{na} and therefore is not $k$-automatic. This implies that by
 $\mathbf{u}$ is not $k$-automatic by Lemma \ref{coding}.
\end{proof}

 In the case where $K$ is the field of complex numbers, there is a more elegant proof of Corollary \ref{na} using the following result in \cite{am}  
 about automatic sequences and Dirichlet series:
 
 Let $k\geq 2$ be an integer and let $(u(n))_n$ be a $k$-automatic sequence with values in $\mathcal{C}$, then the Dirichlet series
 $$\sum\limits_{n=1}^{\infty} \frac{u(n)}{n^s} $$ 
 has a meromorphic continuation to the whole complex plane, whose poles (if any) are located at the points
 $$s=\frac{\log \lambda}{\log k}+\frac{2im\pi}{\log k}-l+1,$$
 where $\lambda$ is any eigenvalue of a certain matrix defined from the sequence $u$, where $m\in \mathbb{Z}$, $l\in \mathbb{N}$, and log 
 is a branch of the complex logarithm.

 \begin{proof}[Proof of Corollary \ref{na} for $K=\mathbb{C}$]
  We denote by $P$ the set of prime numbers. By the assumption of the corollary we know that
  $$P=A\cup B \cup \{p_1, p_2\},$$
  where $A=\{p\in P| u(p)=1\}$, $B=P\backslash(A\cup \{p_1, p_2\}) $.
  
  For $s$ such that $\Re(s)>1$, we consider the Dirichlet series
  \begin{align*}
   \sum\limits_{n=1}^{\infty} \frac{u(n)}{n^s} &= \prod\limits_{p\in P} \sum\limits_{i=0}^{\infty} \left(\frac{u(p)}{p^s}\right)^i \\
   &= \prod\limits_{p\in P}  \frac{1}{1-\frac{u(p)}{p^s}}  \\
   &= \left (\prod\limits_{p\in A}  \frac{1}{1-\frac{1}{p^s}} \right ) \left( \prod\limits_{q\in B}  \frac{1}{1-\frac{u(q)}{q^s}} \right)
   \left( \frac{1}{1-\frac{a}{p_1 ^s}} \right ) \left( \frac{1}{1-\frac{b}{p_2 ^s}} \right ) \\
   &=\left(\prod\limits_{p\in P}   \frac{1}{1-\frac{1}{p^s}} \right ) \left(  \prod\limits_{q\in B} \frac{q^s-1}{q^s-u(q)} \right )
   \frac{p_1 ^s -1}{p_1 ^s -a} \frac{p_2 ^s -1}{ p_2 ^s -b}\\
   &=\zeta(s) \left(  \prod\limits_{q\in B} \frac{q^s-1}{q^s-u(q)} \right )  
   \frac{p_1 ^s -1}{p_1 ^s -a} \frac{p_2 ^s -1}{ p_2 ^s -b}.\\
  \end{align*}
 Since the Riemann Zeta function $\zeta(s)$ is meromorphic on the whole complexe plane and the product over $B$ in the last line is finite,
 the Dirichet series $\sum\limits_{n=1}^{\infty} \frac{u(n)}{n^s}$ has meromorphic continuation on the whole complexe plane. Now we examine the
 poles of this function. The assumption of automaticity and multiplicativity implies that $a$ and $b$ are roots of unity, therefore 
 the poles of $\frac{p_1 ^s -1}{p_1 ^s -a} \frac{p_2 ^s -1}{ p_2 ^s -b}$
  are $\{\frac{(\arg(a)+2n\pi)i}{\log(p_1)}| n\in \mathbb{Z}\}\cup
  \{\frac{(\arg(b)+2m\pi)i}{\log(p_2)}|m\in \mathbb{Z}\}$. Since the $\zeta(s)$ has no zeros on the imaginary axis, these are also poles of the series
  $\sum\limits_{n=1}^{\infty} \frac{u(n)}{n^s}$. Therefore $(u(n))_n$ cannot be $k-$automatic according to the result cited above.

  \end{proof}

  \theoremstyle{remark}

\end{document}